\newtheorem{theorem}{Theorem}
\newtheorem*{legthrm}{Theorem (Legendre, 1808)}
\newtheorem*{catconj}{Theorem (Mih\u{a}ilescu, 2004, conj. Catalan, 1844)}
\newtheorem*{theorem*}{Theorem}
\newtheorem*{definition*}{Definition}
\newtheorem{proposition}[theorem]{Proposition}
\newtheorem{corollary}[theorem]{Corollary}
\begin{document}
\title{The Greatest Common Divisor of Multinomial Coefficients}
\author{John E. Mosley}

\begin{abstract}

While studying a characteristic number of manifolds we noticed that the calculation was simply computing a multiple of a multinomial coefficient. We were, at the time, interested in computing the greatest common divisor of these characteristic numbers, and from that came this fun and interesting number theory problem. In this short paper, I will give the answer to this problem, which gives the greatest common divisor of certain multinomial coefficients.

\end{abstract}

\thanks{{\bf Acknowledgement:} I would like to thank David Leep for helping me learn to write mathematics clearly, as I hope I have done here, as well as for his help in understanding just how interesting this little problem is. I would also like to thank my advisor, Serge Ochanine and Ben Braun for their helpful comments in preparing this paper.}

\maketitle
\section{Background, Notation, and Basic Definitions}

The $s$-number of a manifold, defined in \cite[\S 16]{Miln}, is a characteristic number that detects whether or not the manifold is indecomposable.  While working on an conjecture similar to Milnor's theorem that every complex cobordism class contains a non-singular algebraic variety \cite[p. 130]{Stong}, we noticed that the computation of the $s$-number of a particular class of SU-manifolds was returning a power of 2 times a certain multinomial coefficient.  Following the proof of Milnor's theorem in \cite{Stong}, we were interested in the greatest common divisor of these characteristic numbers, and so, the greatest common divisor of these particular multinomial coefficients.  Constraints on the class of manifolds we were studying lead to the constraints on the size of parts in the main result, which turned out to be interesting in its own right.\\

So, to begin, let's establish some basic definitions and notation.  Throughout the paper, $p$, $q$, and $r$ will be prime numbers.

\begin{definition*} A {\underline{partition}} of $n\in\mathbb{N}$ is a list of non-negative integers that sum to $n$.  The individual entries in the list are called {\underline{parts}}. \end{definition*}

We will denote the set of all partitions of $n$ by $P(n)$, and the set of all partitions with parts of size at most $n-2$ as $\hat{P}(n)$.  A generic partition contained in $\hat{P}(n)$ will be denoted $\sigma\in \hat{P}(n)$.

\begin{definition*} The {\underline{multinomial coefficient}} $\binom{n}{\sigma_1,\sigma_2,...,\sigma_t}$ is defined by $$(x_1 + x_2+ ... + x_t)^n = \sum_{(\sigma_1, \sigma_2, ..., \sigma_t)\in P(n)} \binom{n}{\sigma_1,\sigma_2,...,\sigma_t}x_1^{\sigma_1}x_2^{\sigma_2}...x_t^{\sigma_t}.$$
\end{definition*}

It will occasionally be convenient to denote the multinomial coefficient $\binom{n}{\sigma_1,\sigma_2,...,\sigma_t}$ associated to the partition $\sigma=(\sigma_1,\sigma_2,...,\sigma_t)$ of $n$ by $\binom{n}{\sigma}$.


\begin{definition*} The {\underline{p-adic expansion of $n$}} is the unique expansion $\displaystyle n=\sum_{i=0}^{\infty} a_ip^i$ with $0 \leq a_i \leq p-1$.\end{definition*}

\begin{definition*} The {\underline{$p$-adic order of $n$}}, denoted $\nu_p(n)$, is the largest power $k$ of $p$ such that $p^k|n$. \end{definition*}

\section{Main Result}

The goal of this paper is to prove the following:

\begin{proposition} $$\displaystyle \gcd_{\sigma\in \hat{P}(n)} \binom{n}{\sigma} = \begin{cases} p &\mbox{if } n=p^s \\ q &\mbox{if } n=q^t+1 \\ p\cdot q &\mbox{if } n=p^s \mbox{ and } n=q^t+1 \\ 1 & \mbox{else} \end{cases} $$
\end{proposition}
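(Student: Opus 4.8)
The plan is to reduce the problem, one prime at a time, to a statement about carries and then to a small combinatorial analysis of the allowed partitions. Since $\gcd_{\sigma\in\hat P(n)}\binom{n}{\sigma} = \prod_{\ell}\ell^{\,e_\ell}$ with $e_\ell = \min_{\sigma\in\hat P(n)}\nu_\ell\binom{n}{\sigma}$, it suffices to compute $e_\ell$ for each prime $\ell$. I would invoke Legendre's formula $\nu_\ell(m!) = (m - s_\ell(m))/(\ell-1)$, where $s_\ell(m)$ denotes the sum of the base-$\ell$ digits of $m$; writing the multinomial as $n!/(\sigma_1!\cdots\sigma_t!)$ then gives
\[
\nu_\ell\binom{n}{\sigma} = \frac{1}{\ell-1}\Big(\sum_{i} s_\ell(\sigma_i) - s_\ell(n)\Big),
\]
which (as in Kummer's theorem) is exactly the number of carries when the parts $\sigma_1,\dots,\sigma_t$ are added in base $\ell$. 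In particular $\nu_\ell\binom{n}{\sigma}=0$ precisely when the addition is carry-free, i.e.\ when the parts are formed by distributing, among the $t$ parts, the ``digit tokens'' of $n$: for each position $j$ with base-$\ell$ digit $n_j$ there are $n_j$ tokens of value $\ell^j$, and a partition is carry-free exactly when each part collects at most $n_j$ tokens in each position.

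The heart of the argument is therefore to decide, for each $\ell$, whether $\hat P(n)$ contains a carry-free partition; note that the total number of tokens is $s_\ell(n)$. If $s_\ell(n)\ge 3$, then the digit decomposition of $n$ (each token its own part) is carry-free, and since the remaining tokens always sum to at least $2$, every part is $\le n-2$; hence this partition lies in $\hat P(n)$ and $e_\ell=0$. If $s_\ell(n)=1$, then $n=\ell^s$ and the only carry-free partition is the single part $(n)\notin\hat P(n)$, so no carry-free partition is allowed and $e_\ell\ge 1$. If $s_\ell(n)=2$, there are exactly two tokens $\ell^a,\ell^b$ with $\ell^a+\ell^b=n$; the only carry-free partitions are $(n)$ and $(\ell^a,\ell^b)$, and the latter lies in $\hat P(n)$ iff both tokens are $\le n-2$, i.e.\ iff $\min(\ell^a,\ell^b)\ge 2$. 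The single exception $\min(\ell^a,\ell^b)=1$ forces one token to be $1$ and the other $n-1$, i.e.\ $n=\ell^t+1$, in which case the only candidate $(n-1,1)\notin\hat P(n)$, so again $e_\ell\ge 1$. Thus $e_\ell\ge 1$ exactly when $n=\ell^s$ or $n=\ell^t+1$, and $e_\ell=0$ otherwise.

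It remains to pin the value to $e_\ell=1$ in the two special cases, which I would do by exhibiting a partition in $\hat P(n)$ with exactly one carry: for $n=p^s$ with $s\ge2$ the split $(p^{s-1},(p-1)p^{s-1})$ has a single carry, and for $n=q^t+1$ with $t\ge2$ the split $(q^{t-1}+1,(q-1)q^{t-1})$ does; the remaining low cases ($n=p$, $n=q+1$, and very small $n$) I would check directly, where a two- or three-part partition again yields valuation $1$. Finally I would assemble the primewise data: a number has at most one representation as $p^s$ and its predecessor at most one as $q^t$, and since $p^s$ and $q^t=p^s-1$ are coprime we have $p\ne q$ whenever both occur; multiplying $\prod_\ell\ell^{e_\ell}$ then gives exactly the four stated cases. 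I expect the main obstacle to be the bookkeeping in the $s_\ell(n)=2$ case—disentangling the genuinely excluded configuration $n=\ell^t+1$ from the harmless $n=2\ell^a$ and $n=\ell^a+\ell^b$ (both $a,b\ge1$)—together with the careful treatment of the small-$n$ and $\ell=2$ boundary cases, where the constraint ``parts $\le n-2$'' is most delicate.
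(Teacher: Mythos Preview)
Your proposal is correct and proceeds along the same broad outline as the paper---work one prime $\ell$ at a time, show $e_\ell=0$ generically by exhibiting the ``$\ell$-adic'' partition of $n$ into digit tokens, and in the two exceptional shapes $n=\ell^s$, $n=\ell^t+1$ exhibit a witness with $\nu_\ell=1$---but the machinery you use is different. The paper never invokes the digit-sum form of Legendre or the Kummer carry interpretation; instead it proves two ad hoc corollaries (an additivity formula $\nu_p(n!)=\sum a_i\,\nu_p(p^i!)$ and the recursion $\nu_p(p^m!)=1+p\,\nu_p(p^{m-1}!)$) and argues by direct estimation of $\nu_p(\sigma)$. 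Your carry framework replaces that computation by the one-line criterion ``$\nu_\ell\binom{n}{\sigma}=0$ iff the addition $\sum\sigma_i$ is carry-free in base $\ell$,'' which makes the trichotomy $s_\ell(n)\in\{1,2,\ge 3\}$ do all the structural work and explains transparently why the bound ``parts $\le n-2$'' isolates exactly $n=\ell^s$ and $n=\ell^t+1$. Your single-carry witnesses are also different in form: the paper takes $\hat\sigma_p(n)=(p^{s-1},\dots,p^{s-1})$ with $p$ equal parts (resp.\ $q$ copies of $q^{t-1}$ and a $1$), whereas you use the two-part splits $(p^{s-1},(p-1)p^{s-1})$ and $(q^{t-1}+1,(q-1)q^{t-1})$; both choices give exactly one carry, but yours need the small-$s$, small-$t$, and $\ell=2$ boundary checks you flag, while the paper's equal-part choice lands in $\hat P(n)$ without further case analysis.
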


Now, recall that $$\binom{n}{\sigma} = \binom{n}{\sigma_1,\sigma_2,...,\sigma_t} = \frac{n!}{\sigma_1!\sigma_2!...\sigma_t!}.$$  So, our goal will be to determine when we can find, for each fixed prime $p < n$, a partition $\sigma \in \hat{P}(n)$ with $$\nu_p(\sigma):=\nu_p(\sigma_1!)+\nu_p(\sigma_2!)+...+\nu_p(\sigma_t!)=\nu_p(n!).$$  First, let's recall from \cite{Leg}, the value of $\nu_p(n!)$.

\begin{legthrm} $\displaystyle \nu_p(n!)=\sum_{i=1}^{\infty}\left\lfloor\frac{n}{p^i}\right\rfloor.$\end{legthrm}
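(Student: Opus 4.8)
The plan is to prove Legendre's formula by counting the contribution of each factor in $n! = \prod_{m=1}^{n} m$ to the total power of $p$ dividing $n!$. Since $\nu_p$ is additive over products, the first step is to write
$$\nu_p(n!) = \sum_{m=1}^{n} \nu_p(m).$$
The key observation I would use next is that for any positive integer $m$ the exponent $\nu_p(m)$ equals the number of powers of $p$ that divide $m$; that is,
$$\nu_p(m) = \sum_{i=1}^{\infty} [\,p^i \mid m\,],$$
where $[\,\cdot\,]$ is the indicator that equals $1$ when the condition holds and $0$ otherwise. This is immediate from the definition of the $p$-adic order: if $\nu_p(m) = k$, then exactly $p^1, p^2, \dots, p^k$ divide $m$ and no higher power does, so the sum of indicators is $k$.

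The heart of the argument is then an interchange of the two sums. Substituting the indicator identity and summing over $m$ before $i$ yields
$$\nu_p(n!) = \sum_{m=1}^{n} \sum_{i=1}^{\infty} [\,p^i \mid m\,] = \sum_{i=1}^{\infty} \sum_{m=1}^{n} [\,p^i \mid m\,] = \sum_{i=1}^{\infty} \#\{\, 1 \le m \le n : p^i \mid m \,\}.$$
Since the multiples of $p^i$ in the range $1, \dots, n$ are precisely $p^i, 2p^i, \dots, \lfloor n/p^i \rfloor \cdot p^i$, the inner count is $\lfloor n/p^i \rfloor$, which gives the claimed formula. I would also remark that every term with $p^i > n$ vanishes, so the sum is genuinely finite.

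The step I expect to require the most care is the interchange of summation, although here it is entirely elementary precisely because the double sum is finitely supported: all but finitely many terms are zero, so no analytic justification is needed and no real obstacle arises. As an equally clean alternative I would keep a recursive route in reserve: the multiples of $p$ among $1, \dots, n$ are $p, 2p, \dots, \lfloor n/p \rfloor p$, and extracting one factor of $p$ from each contributes $\lfloor n/p \rfloor$ to $\nu_p(n!)$ while leaving the factorial $(\lfloor n/p \rfloor)!$, so that $\nu_p(n!) = \lfloor n/p \rfloor + \nu_p\!\left((\lfloor n/p \rfloor)!\right)$. Iterating this identity and using $\lfloor \lfloor n/p^{i} \rfloor / p \rfloor = \lfloor n/p^{i+1} \rfloor$ reproduces the same sum, so either approach completes the proof.
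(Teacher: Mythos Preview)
Your proof is correct and is in fact one of the standard arguments for Legendre's formula: expand $\nu_p(n!)=\sum_{m\le n}\nu_p(m)$, rewrite $\nu_p(m)$ as a sum of divisibility indicators, swap the (finitely supported) double sum, and count multiples of $p^i$ up to $n$. The recursive alternative you sketch is also sound.

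There is, however, nothing in the paper to compare it against. The paper does not prove this theorem; it merely \emph{recalls} it, attributing it to Legendre via the citation \cite{Leg}, and then proceeds to use it as a black box in the proofs of Corollaries~1 and~2. So your write-up supplies a proof where the paper intentionally omits one, and any correct argument---yours included---would be an acceptable substitute.
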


We will also use the following two corollaries of this theorem:

\setcounter{theorem}{0}

\begin{corollary} Let $n=a_0 + a_1p + a_2p^2+...+a_sp^s$ be the $p$-adic expansion of $n$.  Then, $$\nu_p(n!)=a_1\cdot\nu_p(p!)+a_2\cdot\nu_p(p^2!)+...+a_s\cdot\nu_p(p^s!).$$  \end{corollary}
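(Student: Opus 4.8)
The plan is to derive the identity directly from Legendre's theorem, so that the whole argument reduces to bookkeeping on a double sum. By Legendre, $\nu_p(n!) = \sum_{i=1}^{\infty} \lfloor n/p^i \rfloor$, and since $n < p^{s+1}$ every term with $i > s$ vanishes; this leaves the finite sum $\sum_{i=1}^{s} \lfloor n/p^i \rfloor$.

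First I would evaluate each floor explicitly. Writing $n = \sum_{j=0}^{s} a_j p^j$, I would split the quotient $n/p^i$ into the contribution of the digits with $j \geq i$, which is the integer $\sum_{j=i}^{s} a_j p^{j-i}$, and the contribution of the digits with $j < i$. The key observation is that $\sum_{j<i} a_j p^j \leq p^i - 1 < p^i$, so the low-order digits contribute a fractional amount strictly less than $1$ and drop out under the floor, giving $\lfloor n/p^i \rfloor = \sum_{j=i}^{s} a_j p^{j-i}$. This is the one step that needs a genuine (if short) justification; everything after it is formal. Substituting into Legendre's formula and interchanging the order of summation, I would collect the coefficient of each digit. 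Because no index $i \geq 1$ satisfies $i \leq 0$, the digit $a_0$ never appears, so the sum runs over $j = 1, \dots, s$, and the coefficient of $a_j$ is the geometric sum $\sum_{i=1}^{j} p^{j-i} = p^{j-1} + p^{j-2} + \cdots + 1$.

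The final step is to recognize this inner sum as $\nu_p(p^j!)$: applying Legendre's theorem once more to $p^j!$ gives $\nu_p(p^j!) = \sum_{i=1}^{\infty} \lfloor p^j/p^i \rfloor = \sum_{i=1}^{j} p^{j-i}$, by exactly the same floor computation. Substituting back yields $\nu_p(n!) = \sum_{j=1}^{s} a_j \, \nu_p(p^j!) = a_1 \nu_p(p!) + \cdots + a_s \nu_p(p^s!)$, which is the claim. I expect no serious obstacle here, as the result is essentially a repackaging of Legendre's formula; the only points requiring care are the floor evaluation in the first step and keeping the index bounds correct when the two summations are swapped.
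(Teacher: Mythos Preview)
Your proposal is correct and follows essentially the same route as the paper: both start from Legendre's formula, evaluate $\lfloor n/p^i\rfloor$ via the $p$-adic expansion, swap the order of summation to collect the coefficient of each digit $a_j$, and then identify that coefficient with $\nu_p(p^j!)$ by a second application of Legendre. Your write-up is slightly more explicit in justifying why the low-order digits drop out of the floor, but the structure and ideas are identical.
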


\begin{proof}
We begin with the formula in Legendre's theorem, and replace $n$ by its $p$-adic expansion. $$\nu_p(n!)=\sum_{i=1}^{\infty}\left\lfloor\frac{n}{p^i}\right\rfloor = \sum_{i=1}^{\infty}\left\lfloor\frac{a_0 + a_1p + a_2p^2+...+a_sp^s}{p^i}\right\rfloor.$$
Expanding the sum, we get $$(a_1+a_2p+...+a_sp^{s-1})+(a_2+a_3p+...+a_sp^{s-2})+...+(a_{s-1}+a_sp)+(a_s)$$ $$=a_1+a_2(1+p)+a_3(1+p+p^2)+...+a_s(1+p+...+p^{s-1})$$  $$=\sum_{i=1}^s a_i \sum_{j=0}^{i-1} p^j$$ $$ =\sum_{i=1}^s a_i\sum_{j=1}^{i}\left\lfloor\frac{p^i}{p^j}\right\rfloor$$
$$ = a_1\cdot\nu_p(p!)+a_2\cdot\nu_p(p^2!)+...+a_s\cdot\nu_p(p^s!).$$

\end{proof}

\begin{corollary} $\nu_p(p^m!)=1+p\cdot\nu_p(p^{m-1}!)$ for all $m \geq 1$. \end{corollary}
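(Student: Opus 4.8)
The plan is to apply Legendre's theorem directly to the number $n = p^m$ and exploit the fact that $p^m$ is itself a pure power of $p$, which collapses each floor into an exact power. Concretely, since $\left\lfloor p^m/p^i\right\rfloor = p^{m-i}$ whenever $1 \le i \le m$ and $\left\lfloor p^m/p^i\right\rfloor = 0$ for $i > m$, Legendre's formula truncates to the finite geometric sum $\nu_p(p^m!) = \sum_{i=1}^{m} p^{m-i}$. This is the only place where we really use anything specific to $p^m$ rather than a general integer; note in particular that the analogue of Corollary 1 here is uninformative, since the $p$-adic expansion of $p^m$ has a single nonzero digit and simply returns the tautology $\nu_p(p^m!) = \nu_p(p^m!)$.

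Next I would peel off the last term of this sum. The term at $i = m$ contributes $p^{0} = 1$, and every remaining term, with $i$ running from $1$ to $m-1$, carries a common factor of $p$: writing $p^{m-i} = p \cdot p^{(m-1)-i}$, the residual sum becomes $p \sum_{i=1}^{m-1} p^{(m-1)-i}$. The inner sum is exactly the Legendre expression $\nu_p(p^{m-1}!)$, so the residual equals $p \cdot \nu_p(p^{m-1}!)$. Recombining the two pieces yields $\nu_p(p^m!) = 1 + p \cdot \nu_p(p^{m-1}!)$, which is the claimed recursion.

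The computation is short, so the main obstacle is not analytic difficulty but bookkeeping at the boundary. I would explicitly confirm the truncation of the infinite sum at $i = m$, and then verify the base case $m = 1$ on its own: there the identity reads $\nu_p(p!) = 1 + p \cdot \nu_p(1!) = 1 + p \cdot 0 = 1$, consistent with $\left\lfloor p/p \right\rfloor = 1$. Since the empty-sum convention gives $\nu_p(p^{0}!) = \nu_p(1!) = 0$, the reindexing in the second step is legitimate for every $m \ge 1$, and the recursion holds uniformly.
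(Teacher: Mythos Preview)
Your proof is correct and follows essentially the same route as the paper: apply Legendre's formula to $p^m$, truncate the sum at $i=m$, separate off the $i=m$ term (which contributes $1$), and factor $p$ out of the remaining terms to recognize $\nu_p(p^{m-1}!)$. The only cosmetic difference is that you evaluate the floors to $p^{m-i}$ explicitly while the paper leaves them as $\lfloor p^m/p^i\rfloor$, and you add an explicit $m=1$ check that the paper omits.
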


\begin{proof}
Again, we begin with the formula in Legendre's theorem, apply a few elementary algebraic operations, and mathematical induction.

$$ \nu_p(p^m!) = \sum_{i=1}^{\infty}\left\lfloor\frac{p^m}{p^i}\right\rfloor = \sum_{i=1}^{m}\left\lfloor\frac{p^m}{p^i}\right\rfloor $$
$$= 1 + \sum_{i=1}^{m-1}\left\lfloor\frac{p^m}{p^i}\right\rfloor $$
$$= 1 + p\cdot \sum_{i=1}^{m-1}\left\lfloor\frac{p^{m-1}}{p^i}\right\rfloor $$
$$=1 + p\cdot\nu_p(p^{m-1}!).$$

\end{proof}

\pagebreak

We will also make use of the following proposition:
\setcounter{theorem}{1}
\begin{proposition} Suppose $n=p^s$ or $n=q^t+1$, then $p$ (respectively, $q$) divides $\binom{n}{\sigma}$ for every $\sigma\in\hat{P}(n)$. \end{proposition}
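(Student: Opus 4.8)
The plan is to reformulate the divisibility $p \mid \binom{n}{\sigma}$ as the statement $\nu_p\!\left(\binom{n}{\sigma}\right) = \nu_p(n!) - \nu_p(\sigma) \ge 1$, and to prove this uniform lower bound for every $\sigma \in \hat{P}(n)$ by passing to base-$p$ digit sums. Writing the $p$-adic expansion of each part as $\sigma_i = \sum_{j\ge 0} a_j^{(i)} p^j$ and applying Corollary 1 to each $\sigma_i!$, I would collect terms by digit position and set $A_j := \sum_i a_j^{(i)}$, the total of the $j$-th base-$p$ digits taken over all parts. This gives $\nu_p(\sigma) = \sum_{j\ge 1} A_j\, \nu_p(p^j!)$, while the relation $\sum_i \sigma_i = n$ becomes the single constraint $\sum_{j\ge 0} A_j p^j = n$.

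Next I would solve the recursion in Corollary 2 to the closed form $\nu_p(p^j!) = (p^j - 1)/(p-1)$, substitute, and simplify using the constraint. The result is the clean identity $\nu_p\!\left(\binom{n}{\sigma}\right) = \frac{1}{p-1}\left(\sum_{j\ge 0} A_j - s_p(n)\right)$, where $s_p(n)$ denotes the sum of the base-$p$ digits of $n$. Thus the whole problem reduces to bounding the total digit mass $\sum_j A_j$ from below. Here the hypothesis on the size of the parts enters decisively: since each part satisfies $\sigma_i \le n - 2 < p^s$ (resp. $< q^t$), every $A_j$ with $j \ge s$ (resp. $j \ge t$) vanishes, so the constraint reads $\sum_{j=0}^{s-1} A_j p^j = p^s$ (resp. $\sum_{j=0}^{t-1} A_j q^j = q^t + 1$) with all the weight confined to positions at most $s-1$ (resp. $t-1$).

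From such a constraint the lower bound follows by the crude estimate $p^j \le p^{s-1}$: in the first case $p^s = \sum_{j} A_j p^j \le p^{s-1} \sum_j A_j$ forces $\sum_j A_j \ge p$, and since $s_p(p^s) = 1$ this yields $\nu_p\!\left(\binom{n}{\sigma}\right) \ge (p-1)/(p-1) = 1$. In the second case the same estimate gives $\sum_j A_j \ge (q^t + 1)/q^{t-1} = q + q^{-(t-1)} > q$, and since $\sum_j A_j$ is an integer it is at least $q + 1$; with $s_q(q^t+1) = 2$ this again gives $\nu_q\!\left(\binom{n}{\sigma}\right) \ge (q+1-2)/(q-1) = 1$.

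I expect the main obstacle to be locating the role of the size restriction correctly: it is precisely the fact that no part can reach the top power ($p^s$ or $q^t$) that prevents the digit mass from being concentrated at the cheapest high position and thereby forces $\sum_j A_j$ up. Conceptually this is Kummer's carry count — a carry into the top digit of $n$ is unavoidable — but I would keep the argument self-contained through Corollaries 1 and 2. The only delicate point beyond this is the integrality rounding in the $n = q^t + 1$ case, where the bound $\sum_j A_j > q$ must be upgraded to $\sum_j A_j \ge q+1$ before the conclusion follows.
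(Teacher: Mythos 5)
Your proposal is correct, and it reaches the conclusion by a genuinely different route from the paper. The paper fixes the top digit position and splits into two cases according to whether the leading digits of the parts sum to exactly $p$ or to less than $p$, invoking the recursion of Corollary 2 in each case; it proves only the $n=p^s$ case in detail and asserts that $n=q^t+1$ is similar. You instead solve that recursion in closed form, $\nu_p(p^j!)=(p^j-1)/(p-1)$, and derive the uniform identity $\nu_p\bigl(\binom{n}{\sigma}\bigr)=\frac{1}{p-1}\bigl(\sum_i s_p(\sigma_i)-s_p(n)\bigr)$ (Legendre's digit-sum formula, equivalently Kummer's carry count), after which both cases follow from a single one-line inequality on the total digit mass. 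What your approach buys is twofold: it isolates exactly where the hypothesis $\sigma_i\le n-2$ is used (it kills the digits at position $s$, resp.\ $t$, which is essential only in the $q^t+1$ case, where the partition $(q^t,1)$ would otherwise give $\nu_q=0$), and it avoids the paper's somewhat delicate second case, where the asserted bound $\nu_p(\sigma)\le p\cdot\nu_p(p^{s-2}!)+(p-1)\cdot\nu_p(p^{s-1}!)$ is not fully justified as stated. The paper's approach, on the other hand, stays closer to the two corollaries as literally stated and additionally records the sharper information that $\nu_p=1$ exactly when the top digits sum to $p$, which is reused later to show $p^2\nmid\binom{n}{\hat{\sigma}_p(n)}$. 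Your handling of the integrality rounding in the $q^t+1$ case is the one point that needs care, and you have it right.
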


\begin{proof}
    For brevity, we prove only the case that $n=p^s$.  The case that $n=q^t+1$ is proved similarly.\\

    Suppose that $n=p^s$, and let $\sigma\in\hat{P}(n)$. For each part $\sigma_i$ of $\sigma$ we can consider its $p$-adic expansion: $$\sigma_i = a_{i,0}+a_{i,1}p+...+a_{i,(s-1)}p^{s-1}.$$  Note that since $n=p^s$, no $p$-adic expansion of any part of $\sigma\in\hat{P}(n)$ has non-zero coefficient on the $p^s$ term.  Also, since $$n = p^s = \sigma_1 + \sigma_2 + ... + \sigma_f$$ $$=( a_{1,0}+a_{1,1}p+...+a_{1,(s-1)}p^{s-1}) +( a_{2,0}+a_{2,1}p+...+a_{2,(s-1)}p^{s-1}) +...+(a_{f,0}+a_{f,1}p+...+a_{f,(s-1)}p^{s-1})$$ $$=(a_{1,0}+...+a_{f,0})+(a_{1,1}+...+a_{f,1})p+...+(a_{1,(s-1)}+...+a_{f,(s-1)})p^{s-1}$$ ~\\ we can observe that $a_{1,(s-1)}+...+a_{f,(s-1)}$ is at most $p$.  \\

    Now we observe the following two cases.  If $a_{1,(s-1)}+...+a_{f,(s-1)}=p$, then we are in the case presented in Corollary 2, and we have that $$ p\cdot\nu_p(p^{s-1}!) < \nu_p(p^s!).$$ So, $p|\binom{n}{\sigma}$. \\
    
    On the other hand, if $a_{1,(s-1)}+...+a_{f,(s-1)} < p$, then there is some $0<j<s-1$ for which the sum $a_{1,j}+...+a_{f,j} > 1$, and it follows from Corollary 2 that $$\nu_p(\sigma) \leq p\cdot\nu_p(p^{s-2}!)+ (p-1)\cdot\nu_p(p^{s-1}!)< p\cdot\nu_p(p^{s-1}!) < \nu_p(p^s!).$$ So, $p^k|\binom{n}{\sigma}$ for some $k\geq 2$.\\

    Therefore, $p$ divides $\binom{n}{\sigma}$ for every $\sigma\in\hat{P}(n)$.
\end{proof}

Finally, define for each $p$ the $p$-adic partition of $n$ to be $$\sigma_p(n):=\left(\underbrace{p^{s},...,p^{s}}_{a_{s} \mbox{ entries}},\underbrace{p^{s-1},...,p^{s-1}}_{a_{s-1} \mbox{ entries}},...,\underbrace{p,...,p}_{a_{1} \mbox{ entries}},\underbrace{1,...,1}_{a_{0} \mbox{ entries}}\right).$$

\begin{proof}[Proof of Proposition 1]

We are now ready to prove the main result.  The goal, again, is to determine when we can find, for each prime $p<n$, a partition of $n$ whose associated multinomial coefficient is not divisible by $p$.\\

First, suppose that $n$ is neither a prime power nor one more than a prime power. It follows from Corollary 1 that for each prime $p<n$, $p \nmid \binom{n}{\sigma_p(n)}$.  Since, for each prime $p<n$, we have a multinomial coefficient not divisible by $p$, the greatest common divisor of multinomial coefficients over all partitions in $\hat{P}(n)$ is 1.\\

On the other hand, suppose that $n=p^s$ or $n=q^t+1$. Then, we have that $\sigma_p=(p^s)$ or $\sigma_q=(q^t, 1)$.  Note that these partitions are not in $\hat{P}(n)$.  Let's define instead $$\hat{\sigma}_p(n):=\left(\underbrace{p^{s-1},...,p^{s-1}}_{p \mbox{ entries}}\right),$$ and $$\hat{\sigma}_q(n):=\left(\underbrace{q^{t-1},...,q^{t-1}}_{q \mbox{ entries}}, 1\right).$$ 

It follows from Corollary 2 that $p|\binom{n}{\hat{\sigma}_p(n)}$, but $p^2 \nmid \binom{n}{\hat{\sigma}_p(n)}$, and from Proposition 2 that $p|\binom{n}{\sigma}$ for every $\sigma\in\hat{P}(n)$. Similarly, $q|\binom{n}{\hat{\sigma}_q(n)}$, but $q^2 \nmid \binom{n}{\hat{\sigma}_q(n)}$, and $q|\binom{n}{\sigma}$ for every $\sigma\in\hat{P}(n)$. \pagebreak So, if $n=p^s$ or $n=q^t+1$ we can consider the multinomial coefficient associated to the $r$-adic partition for any prime, $r$, less than $n-1$, and $\hat{\sigma}_p(n)$ (respectively, $\hat{\sigma}_q(n)$). Then, the greatest common divisor of multinomial coefficients over all partitions in $\hat{P}(n)$ is $p$ (respectively, $q$).\\

Finally, if $n=p^s$ and $n=q^t+1$, we can consider the multinomial coefficient associated to the $r$-adic partition for any prime, $r$, less than $n-1$, $\hat{\sigma}_p$, and $\hat{\sigma}_q$.  This gives that the greatest common divisor of multinomial coefficients over all partitions in $\hat{P}(n)$ is $p\cdot q$.

\end{proof}

\section{Further Refinement and Interesting Connections}

The penultimate case stated in Proposition 1, when $n=p^s=q^t+1$, is of particular interest.  Of course, if $p^s=q^t+1$, then $p^s-q^t=1$. Solutions to this equation are the subject of Eug\`{e}ne Catalan's famous conjecture from 1844 that was proved by Preda Mih\u{a}ilescu in 2004 \cite{Mih}:

\begin{catconj}
For $p, q$ prime, and $s,t > 1$, the Diophantine equation $p^s-q^t=1$ admits only one solution.  In particular, $3^2-2^3=9-8=1$.

\end{catconj}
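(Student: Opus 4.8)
The statement is precisely Mih\u{a}ilescu's theorem, so any honest plan must reproduce the cyclotomic-field strategy by which it was finally proved; no short elementary route is known, and the result is the culmination of a century and a half of work. The plan is first to reduce the prime-base assertion to the general Catalan equation. If $\ell$ is a prime dividing $s$ and $m$ a prime dividing $t$, then setting $X=p^{s/\ell}$ and $Y=q^{t/m}$ converts $p^s-q^t=1$ into $X^\ell-Y^m=1$ with $\ell,m$ prime and $X,Y>1$; once the unique solution of this reduced equation is known to be $3^2-2^3$, we recover $p=3,\ s=\ell=2$ and $q=2,\ t=m=3$ routinely. The cases where one exponent equals $2$ are classical: $X^\ell-Y^2=1$ was settled by Lebesgue (1850) and $X^2-Y^m=1$ by Ko Chao (1965), with Euler's analysis of $X^2-Y^3=\pm1$ producing the lone solution. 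This leaves the central problem, $X^\ell-Y^m=1$ with $\ell,m$ odd primes.

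Next I would install the cyclotomic machinery. Cassels' divisibility relations give $\ell\mid Y$ and $m\mid X$, after which one factors $Y^m=X^\ell-1=\prod_{i=0}^{\ell-1}(X-\zeta^i)$ in the ring $\mathbb{Z}[\zeta]$, where $\zeta$ is a primitive $\ell$-th root of unity. The factors are nearly coprime, so each generates an ideal that is an $m$-th power up to controlled factors and units, and the obstruction to these being genuine $m$-th powers is governed by the ideal class group of $\mathbb{Q}(\zeta)$, in particular by its minus part. Stickelberger's theorem, which produces explicit annihilators of this class group out of Gauss sums, then sharply constrains any solution; from it one can extract the double-Wieferich congruences $\ell^{m-1}\equiv1\pmod{m^2}$ and $m^{\ell-1}\equiv1\pmod{\ell^2}$, which already make solutions exceedingly rare.

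The hard part, and the step I expect to be the genuine obstacle, is everything after these congruences. By themselves the Wieferich conditions do not terminate the search, and the finiteness theorem of Tijdeman (1976), proved via Baker's bounds on linear forms in logarithms, yields an explicit but astronomically large bound on $\ell$ and $m$, far beyond any feasible verification. Closing this gap purely algebraically is the decisive move. Here I would follow Mih\u{a}ilescu's two theorems: a group-ring argument over $\mathbb{Z}[\mathrm{Gal}(\mathbb{Q}(\zeta)/\mathbb{Q})]$ using the Stickelberger ideal to force a divisibility of the relative class number $h_\ell^-$, combined with Thaine's theorem on the action of cyclotomic units on the class group, to show that a nontrivial solution would make a certain algebraic integer simultaneously nonzero yet too small in every archimedean absolute value, a contradiction. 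Organizing this into the dichotomy according to whether $m$ divides $h_\ell^-$, and supplying the semisimple representation-theoretic input that makes the annihilator argument run, is where essentially all of the difficulty resides and is what resisted proof for so long.
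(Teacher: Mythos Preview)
The paper does not prove this theorem at all: it simply states Mih\u{a}ilescu's result and cites \cite{Mih}, using it only as context to refine Proposition~1 into Corollary~3. So there is no ``paper's own proof'' to compare against; the theorem functions here as a quoted black box.

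Your outline is a faithful high-level sketch of Mih\u{a}ilescu's actual argument (reduction to prime exponents, the classical cases $\ell=2$ or $m=2$, Cassels' relations, factorisation in $\mathbb{Z}[\zeta]$, Stickelberger annihilators and the double Wieferich congruences, and finally the Thaine/group-ring step that closes the gap without appealing to Tijdeman's effective bound). As a summary of what a genuine proof entails it is accurate, and you correctly flag that the post-Wieferich annihilator argument is where all the depth lies. But none of this is expected or supplied in the paper under review: for the purposes of this paper you would simply cite the result, exactly as the author does, rather than attempt to reproduce a proof that occupies an entire \emph{Crelle} article.
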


We note, however, that there are other solutions when either $s$ or $t$ is 1.  In particular, if $s=t=1$, we have that $$3^1-2^1=1.$$ 

If $s=1$ with $t>1$, we have that $p^1=q^t+1$ must be odd, since if $p=2$, $q^t \leq 1$.  So, $p$ must be an odd prime, and $q=2$.  Primes of this form, $p = 2^t+1,$ are called {\emph{Fermat primes}}.  Some examples of Fermat primes are $$5 = 2^2+1,$$  $$17 = 2^4 + 1,$$  $$257 = 2^8+1,$$ and $$65537=2^{16}+1.$$  In fact, this list, together with the case $3=2^1 + 1$ is the complete list of known Fermat primes. \\

Conversely, if $t=1$ with $s>1$, we have that $p^s=q^1+1$ must be even, since if $q=2$, $p^s=3$, so $p=3$ and $s=1$.  So, $q$ must be an odd prime, and $p=2$. Primes of this form, $q=2^s - 1,$ are called {\emph{Mersenne primes}}. Some examples of Mersenne primes are $$7 = 2^3-1,$$ $$31 = 2^5 - 1,$$ and $$127 = 2^7 - 1.$$  There are currently 48 known Mersenne primes, the largest of which is $2^{57885161}-1$. \\

 Now we can observe the following refinement of Proposition 1.  It is interesting to note that it is currently unknown if there are infinitely many examples satisfying cases 3 and 4 of this corollary.  

\begin{corollary}
$$\displaystyle \gcd_{\sigma\in \hat{P}(n)} \binom{n}{\sigma} = \begin{cases} p &\mbox{if } n=p^s \\ q &\mbox{if } n=q^t+1 \\ 2\cdot q &\mbox{if } n=p^s=q^t+1 \mbox{ and } n \mbox{ even}\\  2\cdot p &\mbox{if } n=p^s=q^t+1 \mbox{ and } n \mbox{ odd}\\ 1 & \mbox{else} \end{cases} $$

\end{corollary}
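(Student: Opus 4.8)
The plan is to derive this corollary directly from Proposition~1, since the two statements agree except where Proposition~1 produces the value $p\cdot q$. First I would observe that cases~1, 2, and 5 of the corollary are verbatim the cases of Proposition~1 giving $p$, $q$, and $1$ respectively, so these require no new argument. The entire content of the refinement lies in resolving the single case $n=p^s=q^t+1$, for which Proposition~1 already gives $\gcd_{\sigma\in\hat{P}(n)}\binom{n}{\sigma}=p\cdot q$, into the two parity-dependent subcases asserted here.

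For that case, the key observation is arithmetic rather than combinatorial: from $n=p^s=q^t+1$ we have $p^s-q^t=1$, so $p^s$ and $q^t$ are consecutive integers and therefore have opposite parity, with exactly one of them even. Since $2$ is the only even prime, a prime power $r^m$ is even if and only if $r=2$. I would then split on the parity of $n=p^s$. If $n$ is even, then $p^s$ is even, which forces $p=2$, and hence $p\cdot q=2\cdot q$. If $n$ is odd, then $q^t=n-1$ is even, which forces $q=2$, and hence $p\cdot q=2\cdot p$. Substituting these into the value supplied by Proposition~1 yields exactly the third and fourth cases of the corollary.

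The one point meriting care is that the labels $p$ and $q$ are unambiguously determined by $n$: if $n=p^s$ then $p$ is the unique prime dividing $n$, and if $n=q^t+1$ then $q$ is the unique prime dividing $n-1$. Thus the two subcases are mutually exclusive and jointly exhaustive over all $n$ with $n=p^s=q^t+1$. I do not expect a genuine obstacle here; the combinatorial substance is already carried by Proposition~1 (and, through it, Proposition~2 and Corollary~2), so the refinement is a purely number-theoretic consequence of the fact that consecutive integers have opposite parity, together with the uniqueness of the even prime.
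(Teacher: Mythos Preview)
Your argument is correct and, in essence, is the same parity observation the paper relies on: since $p^s$ and $q^t$ are consecutive, exactly one of them is even, forcing the corresponding prime to be $2$. The paper does not give a formal proof of this corollary; instead it precedes the statement with a discussion that splits into the subcases $s=t=1$, $s=1<t$, $t=1<s$, and $s,t>1$ (the latter invoking Mih\u{a}ilescu's theorem to pin down the single solution $9=3^2=2^3+1$), in each instance observing that one of the two primes must be $2$. Your derivation is more direct and more elementary: the bare parity argument handles all four subcases at once and shows that Mih\u{a}ilescu's theorem, while supplying the interesting context about Fermat and Mersenne primes, is not logically needed for the corollary itself. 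Your final paragraph checking that $p$ and $q$ are uniquely determined by $n$ is a nice touch that the paper leaves implicit.
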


\end{document}